\newtheorem{theorem}{Theorem}[section]
\newtheorem{proposition}[theorem]{Proposition}
\newtheorem{corollary}[theorem]{Corollary}
\newtheorem{lemma}[theorem]{Lemma}
\theoremstyle{remark}
\theoremstyle{definition}
\newtheorem{example}[theorem]{Example}
\newcommand{\bq}{\begin{equation}}
\newcommand{\eq}{\end{equation}}
\newcommand{\beqn}{\begin{eqnarray*}}
\newcommand{\eeqn}{\end{eqnarray*}}
\newcommand{\beq}{\begin{eqnarray}}
\newcommand{\eeq}{\end{eqnarray}}
\newcommand{\rar}{\rightarrow}
\newcommand{\bc}{\begin{centre}}
\newcommand{\ec}{\end{centre}}
\newcommand{\ba}{\begin{array}}
\newcommand{\ea}{\end{array}}
\newcommand{\inp}[2]{\langle{#1},\,{#2} \rangle}
\renewcommand{\Delta}{{\nabla}}
\newcommand*{\Ge}{\geqslant}
\newcommand*{\Le}{\leqslant}
\begin{document}
\title[Von Neumann's inequality for operator-valued multishifts]
{Von Neumann's inequality for commuting \\operator-valued multishifts}
\author[R. Gupta]{Rajeev Gupta}
\author[S. Kumar]{Surjit Kumar}
\author[S. Trivedi]{Shailesh Trivedi}
\address{Department of Mathematics and Statistics\\
Indian Institute of Technology  Kanpur, India}
   \email{rajeevg@iitk.ac.in}
\address{Department of Mathematics \\ Indian Institute of Science Bangalore, India}
\email{surjitkumar@iisc.ac.in}
\address{Department of Mathematics and Statistics\\
Indian Institute of Technology  Kanpur, India}
   \email{shailtr@iitk.ac.in}

\thanks{The work of first and second author was supported by Inspire Faculty Fellowship (Ref. No. DST/INSPIRE/04/2017/002367, DST/INSPIRE/04/2016/001008) while that of third author was supported by the National Post-doctoral Fellowship (Ref. No. PDF/2016/001681), SERB}

   \subjclass[2010]{Primary 47B37, Secondary 47A13}
\keywords{operator-valued multishift, von Neumann's inequality, tensor product}

\date{}

\begin{abstract}
Recently, Hartz proved that every commuting contractive classical multishift with non-zero weights satisfies the matrix-version of von Neumann's inequality. We show that this result does not extend to the class of commuting operator-valued multishifts with invertible operator weights. In particular, we show that if $A$ and $B$ are commuting contractive $d$-tuples of operators such that $B$ satisfies the matrix-version of von Neumann's inequality and $(1, \ldots, 1)$ is in the algebraic spectrum of $B$, then the tensor product $A \otimes B$ satisfies the von Neumann's inequality if and only if $A$ satisfies the von Neumann's inequality. We also exhibit several families of operator-valued multishifts for which the von Neumann's inequality always holds. 
\end{abstract}

\maketitle

\section{Introduction}

The celebrated von Neumann's inequality \cite{von} says that if $T$ is a contraction on a Hilbert space $\mathcal H$, then $\|p(T)\| \Le \sup_{|z|<1} |p(z)|$ for every polynomial $p$. Generalizing this result, Sz.-Nagy \cite{Sz} proved that every contraction has a unitary dilation. Later Ando \cite{Ando} (see also \cite{SF}) extended this result and showed that every pair of commuting contractions dilates to a pair of commuting unitaries, and hence, every pair of commuting contractions satisfies the von Neumann's inequality. Thus it is natural to ask whether the von Neumann's inequality holds for a $d$-tuple of commuting contractions, $d \Ge 3$. Surprisingly, it fails for $d \Ge 3$. In fact, Varopoulos, in \cite{V1}, showed that there exists a big enough $d$ for which the von Neumann's inequality fails for a $d$-tuple of commuting contractions. In the addendum of the same paper, he together with Kaijser and independently Crabb and Davie \cite{CrD} gave examples of three commuting contractions which do not satisfy the von Neumann's inequality. Since then it has been one of the peculiar topics in operator theory.  In \cite[Question 36]{S}, Shields asked whether a $d$-tuple of commuting contractive weighted shifts (in other words, contractive classical multishift) satisfies the von Neumann's inequality. This question was attributed to Lubin and was explicitly mentioned in \cite{JL}. Recently, Hartz \cite{Hz} answered this question affirmatively and proved the following result:
\begin{theorem}\cite[Theorem 1.1]{Hz}\label{Hartz}
Let $T = (T_1, \ldots, T_d)$ be a contractive classical multishift with non-zero weights. Then $T$ dilates to a $d$-tuple of commuting unitaries.
\end{theorem}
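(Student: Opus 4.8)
The plan is to prove the equivalent statement that $T$ satisfies the complete (matrix-valued) von Neumann inequality over the polydisc algebra $A(\D^d)$: by Arveson's dilation theorem together with the Sz.-Nagy--Foias theory, a commuting $d$-tuple of contractions dilates to commuting unitaries if and only if the unital homomorphism $p \mapsto p(T)$ on polynomials is completely contractive for the supremum norm on $\D^d$, so the two formulations may be used interchangeably and it suffices to produce a commuting unitary dilation. First I would normalize the weights. Writing $T_i e_\alpha = w^{(i)}_\alpha e_{\alpha+\epsilon_i}$ on $\ell^2(\N^d)$, where $\epsilon_i$ is the $i$-th standard basis vector, the hypothesis that every $w^{(i)}_\alpha \neq 0$ lets me conjugate by a diagonal unitary and assume all $w^{(i)}_\alpha > 0$; the commutation relations are preserved and contractivity becomes $0 < w^{(i)}_\alpha \Le 1$.

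Next I would realize $T$ as a multiplication tuple. The commutation relation $w^{(j)}_\alpha w^{(i)}_{\alpha+\epsilon_j} = w^{(i)}_\alpha w^{(j)}_{\alpha+\epsilon_i}$ is exactly the path-independence condition needed to define positive numbers $c_\alpha$ by $c_0 = 1$ and $c_{\alpha+\epsilon_i} = (w^{(i)}_\alpha)^2 c_\alpha$. Then $T$ is unitarily equivalent to the tuple $M_z = (M_{z_1},\ldots,M_{z_d})$ of multiplications by the coordinate functions on the weighted Hardy space $\mathcal H$ with orthonormal basis $\{z^\alpha/\sqrt{c_\alpha}\}$, i.e.\ the reproducing kernel Hilbert space on $\D^d$ with diagonal kernel $k(z,w)=\sum_{\alpha} c_\alpha^{-1}(z\bar w)^\alpha$. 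Under this identification commutativity is automatic and contractivity is precisely the statement that $(c_\alpha)_{\alpha\in\N^d}$ is coordinatewise non-increasing. Thus the whole theorem reduces to the clean assertion: for every positive, coordinatewise non-increasing sequence $(c_\alpha)$, the tuple $M_z$ on $\mathcal H$ dilates to commuting unitaries.

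To construct the dilation I would pass to the Naimark/Arveson picture: a commuting unitary dilation of $M_z$ is equivalent to a $\mathcal B(\mathcal H)$-valued positive operator (semispectral) measure on the torus $\T^d$ whose moments recover the mixed products $M_z^{*n}M_z^{m}$, and the existence of such a measure is in turn equivalent to a single positivity condition on the Gram data built from $(c_\alpha)$. The natural model for the dilation space is $L^2(\T^d)\otimes\mathcal E$ with the $U_i$ taken to be the commuting coordinate unitaries, so that everything reduces to constructing the correct isometric embedding of $\mathcal H$; concretely this amounts to writing the defect data of $M_z$ as a completely positive combination that is compatible across all $d$ coordinates at once.

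The main obstacle is exactly this last positivity. The subtlety is that coordinatewise monotonicity is strictly weaker than complete (Hausdorff) monotonicity, so $(c_\alpha)$ need not be a moment sequence and $M_z$ need not be subnormal; hence the dilation cannot come from a joint normal extension, nor from Brehmer's regular-dilation criterion, and one cannot simply split $(c_\alpha)$ into a positive combination of product sequences coming from doubly commuting (tensor-product) shifts, which would trivialize the problem. I would therefore attack the crux by an inductive, coordinate-by-coordinate dilation in the spirit of And\^o's theorem: dilate $M_{z_1}$ to a unitary on a larger space by the Sz.-Nagy construction, extend the remaining multiplications to commuting contractions there, and iterate, using the monotonicity of $(c_\alpha)$ in each variable to guarantee at every stage that the partially dilated tuple still admits commuting contractive extensions. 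Verifying that these successive extensions can be chosen to commute --- the step that genuinely fails for arbitrary commuting triples and must be rescued here by the weighted-shift structure together with coordinatewise monotonicity --- is the heart of the argument and the part I expect to be hardest; as a fallback I would approximate $(c_\alpha)$ by finitely supported perturbations admitting explicit commuting unitary dilations and control the dilation constant uniformly.
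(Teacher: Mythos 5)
Before the comparison, one point of order: the paper you were given never proves this statement. It is Hartz's theorem, quoted verbatim with the citation \cite{Hz}, and it is used purely as a black box (it is invoked only to derive the corollary following Proposition \ref{decom-classical}). So your proposal has to be measured against what a complete proof actually requires. Your preliminary reductions are correct and standard: diagonal conjugation to positive weights, the weighted Hardy space model with $\|z^\alpha\|^2=c_\alpha$, the observation that contractivity is exactly coordinatewise monotonicity of $(c_\alpha)$, and the equivalence between commuting unitary dilations and the matrix von Neumann inequality (this is \cite[Corollary 7.7]{P}, which the paper itself cites). You also correctly rule out the easy exits: $(c_\alpha)$ need not be a Hausdorff moment sequence, and Brehmer's regular-dilation criterion fails. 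But everything after that is an IOU. Your engine is an iterated Sz.-Nagy/And\^o dilation, one coordinate at a time, and you yourself flag the commutativity of the successive extensions as unresolved; that step is not a hard verification inside a proof, it \emph{is} the theorem (open from Shields' 1974 question until Hartz), and it is precisely where every naive inductive scheme collapses --- And\^o's theorem famously does not iterate, which is why von Neumann's inequality fails for $d\Ge 3$. Worse, the heuristic you offer as a rescue ("the weighted-shift structure together with coordinatewise monotonicity") is refuted by the very paper at hand: after one dilation step the remaining operators are at best operator-valued weighted shifts, and Proposition \ref{prop-for-c-example} together with the closing example shows that commuting contractive operator-valued multishifts with invertible operator weights can fail von Neumann's inequality. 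So any correct argument must exploit the \emph{scalar} (diagonal) nature of the weights in a way your plan never isolates. A further technical slip: a semispectral measure on $\mathbb{T}^d$ reproducing the mixed moments $M_z^{*n}M_z^{m}$ is equivalent to a \emph{regular} unitary dilation, not to a unitary dilation; read literally, your Naimark reformulation is the Brehmer condition, which you correctly note is unavailable here.

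What is missing is a structural mechanism converting coordinatewise monotonicity of the moment sequence into a dilation, and for nonzero scalar weights such a mechanism exists. Since $c$ is coordinatewise non-increasing, every superlevel set $D_t=\{\alpha\in\mathbb{N}^d : c_\alpha\Ge t\}$ is an order ideal (downward closed set) in $\mathbb{N}^d$, so $\ell^2(D_t)$ is a joint co-invariant subspace for the Hardy shift tuple $S$ on $H^2(\mathbb{D}^d)=\ell^2(\mathbb{N}^d)$, and the compression of $S$ to it dilates to commuting unitaries because $S$ extends to the unitary tuple on $\ell^2(\mathbb{Z}^d)$. The layer-cake identity $c_\alpha=\int_0^1 \mathbf{1}_{D_t}(\alpha)\,dt$ shows that $e_\alpha\mapsto \mathbf{1}_{[0,c_\alpha]}\otimes e_\alpha$ is an isometry of the weighted space into $L^2[0,1]\otimes H^2(\mathbb{D}^d)$, and a direct check shows it exhibits $T$ as the restriction, to a joint invariant subspace, of the compression of $I\otimes S$ to the joint co-invariant subspace $\{f : f(t)\in \ell^2(D_t)\ \mbox{a.e.}\}$. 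Since the matrix von Neumann inequality survives both compressions to co-invariant subspaces and restrictions to invariant subspaces, $T$ satisfies it and hence dilates to commuting unitaries by \cite[Corollary 7.7]{P}. Note where the nonzero-weight hypothesis enters: only then is the tuple determined up to unitary equivalence by the moment sequence $c_\alpha=\|T^\alpha e_0\|^2$, so that this decomposition of $c$ into dilatable pieces says anything about $T$ at all. It is an idea of this kind --- decomposing the moment data, not lifting commutants inductively --- that closes the gap, and it is entirely absent from your proposal.
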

In view of this, it is natural to ask whether the above result extends to the class of commuting operator-valued multishifts with invertible operator weights. The purpose of this note is to study the von Neumann's inequality for commuting operator-valued multishifts. A key tool in this study is the following  characterization for the tensor product of two $d$-tuples of commuting contractions to satisfy the von Neumann's inequality.

\begin{theorem}\label{intro-thm}
Let $d$ be a positive integer and let $A = (A_1, \ldots, A_d)$, $B = (B_1, \ldots, B_d)$ be two commuting $d$-tuples of contractions on the Hilbert spaces $\mathcal H$ and $\mathcal K$ respectively. Suppose that $B$ satisfies the matrix-version of von Neumann's inequality and $(1, \ldots, 1)$ belongs to the algebraic spectrum $\sigma(B)$ of $B$. Then $A \otimes B = (A_1 \otimes B_1, \ldots, A_d \otimes B_d)$ satisfies the von Neumann's inequality if and only if $A$ satisfies the von Neumann's inequality. 
\end{theorem}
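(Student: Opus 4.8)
The plan is to establish the two implications separately, relying throughout on the single algebraic observation that for a polynomial $p(z) = \sum_\alpha c_\alpha z^\alpha$ in $d$ variables one has, since the factors within each monomial commute,
\[
p(A \otimes B) = \sum_\alpha c_\alpha\, A^\alpha \otimes B^\alpha,
\]
where $A^\alpha = A_1^{\alpha_1}\cdots A_d^{\alpha_d}$ and similarly for $B$. The useful reinterpretation is that for a fixed $w = (w_1, \ldots, w_d) \in \ol{\D}^d$ the operator $\sum_\alpha c_\alpha w^\alpha A^\alpha$ on $\mathcal H$ equals $p(w_1 A_1, \ldots, w_d A_d)$, i.e.\ $p$ evaluated at the $d$-tuple $wA := (w_1 A_1, \ldots, w_d A_d)$.

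For the implication that $A$ satisfying von Neumann's inequality forces $A \otimes B$ to satisfy it, I would read $p(A \otimes B) = \sum_\alpha (c_\alpha A^\alpha) \otimes B^\alpha$ as the evaluation at $B$ of the operator-coefficient polynomial $Q(w) = \sum_\alpha (c_\alpha A^\alpha) w^\alpha$, whose coefficients lie in $\mathcal B(\mathcal H)$. First I would upgrade the hypothesized matrix-version of von Neumann's inequality for $B$ to the version with coefficients in $\mathcal B(\mathcal H)$, by compressing the coefficients to finite-dimensional subspaces of $\mathcal H$ and passing to the limit. This yields
\[
\|p(A \otimes B)\| = \|Q(B)\| \Le \sup_{w \in \ol{\D}^d}\|Q(w)\| = \sup_{w \in \ol{\D}^d}\|p(wA)\|.
\]
It then remains to note that for each $w \in \ol{\D}^d$ the tuple $wA$ is again a commuting tuple of contractions, and that since $q(wA) = q_w(A)$ with $q_w(z) := q(w_1 z_1, \ldots, w_d z_d)$ and $\|q_w\|_\infty \Le \|q\|_\infty$, the tuple $wA$ inherits von Neumann's inequality from $A$. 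Hence each $\|p(wA)\| \Le \sup_{z \in \ol{\D}^d}|p(z)|$, and taking the supremum over $w$ closes this direction.

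For the converse, that $A \otimes B$ satisfying von Neumann's inequality forces $A$ to satisfy it, I would exploit the hypothesis $(1, \ldots, 1) \in \sigma(B)$ to produce joint approximate eigenvectors. Since $(1,\ldots,1)$ lies in the algebraic spectrum, the column map $k \mapsto ((B_1 - 1)k, \ldots, (B_d - 1)k)$ fails to be bounded below (or, in the right-spectrum case, the analogous statement holds for the adjoints), so there are unit vectors $k_n \in \mathcal K$ with $B_i k_n \to k_n$ for every $i$. Because each $B_i$ is a contraction, a telescoping estimate gives $\|B^\alpha k_n - k_n\| \Le \sum_i \alpha_i \|B_i k_n - k_n\| \to 0$, whence for any unit vector $h \in \mathcal H$,
\[
\bigl\| p(A \otimes B)(h \otimes k_n) - (p(A)h) \otimes k_n \bigr\| \Le \sum_\alpha |c_\alpha|\,\|A^\alpha h\|\,\|B^\alpha k_n - k_n\| \longrightarrow 0.
\]
Since $\|h \otimes k_n\| = 1$ and $\|(p(A)h) \otimes k_n\| = \|p(A)h\|$, this forces $\|p(A \otimes B)\| \Ge \|p(A)h\|$ for every unit $h$, hence $\|p(A \otimes B)\| \Ge \|p(A)\|$; combined with von Neumann's inequality for $A \otimes B$ this gives $\|p(A)\| \Le \sup_{z \in \ol{\D}^d}|p(z)|$. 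In the right-spectrum case I would run the same computation on the bilinear form $\langle p(A \otimes B)(h \otimes k_n), g \otimes k_n\rangle$, using $B_i^* k_n \to k_n$ and $\langle B^\alpha k_n, k_n\rangle \to 1$.

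The two genuinely technical points, which I expect to be the main obstacles, are the passage from the finite matrix-version of von Neumann's inequality for $B$ to the operator-coefficient version with coefficients in $\mathcal B(\mathcal H)$ (needed because $\mathcal H$ may be infinite-dimensional, and handled by a compression-and-limit argument), and the careful extraction of joint approximate eigenvectors from the mere membership $(1,\ldots,1) \in \sigma(B)$, including the bookkeeping that separates the left- and right-spectrum cases. Everything else reduces to the elementary algebra recorded above.
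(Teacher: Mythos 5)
Your forward direction (von Neumann's inequality for $A$ implies it for $A \otimes B$) is essentially the paper's own proof in different notation: your operator-coefficient polynomial $Q(w)$ is their $p(A,w)$, your compression-and-limit step is their net of compressions $P_F A^\alpha P_F$ converging in the strong operator topology, and your observation that $p(wA) = q_w(A)$ with $\|q_w\|_\infty \Le \|p\|_\infty$ is their identity $\sup_w \sup_z |p(z,w)| = \sup_z|p(z)|$. That half is correct.

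The converse direction is where you diverge from the paper, and it contains a genuine gap. The paper defines the algebraic spectrum relative to the commutative Banach algebra $\mathfrak B$ generated by $B_1, \ldots, B_d$: the point $(1,\ldots,1)$ lies in $\sigma(B)$ when there are no $S_1, \ldots, S_d \in \mathfrak B$ with $\sum_j (B_j - 1)S_j = I$. Your inference ``hence the column map $k \mapsto ((B_1-1)k, \ldots, (B_d-1)k)$, or its adjoint version, fails to be bounded below'' is the statement that $(1,\ldots,1)$ lies in the Harte spectrum relative to all of $\mathcal B(\mathcal K)$, and failure of invertibility within the subalgebra $\mathfrak B$ is strictly weaker than that. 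A one-variable counterexample to your general principle: let $B$ be the bilateral shift. Then $0$ lies in the algebraic spectrum (the closed unital algebra generated by $B$ contains no inverse of $B$; its Gelfand spectrum is $\overline{\mathbb D}$), yet $B$ and $B^*$ are isometries, so both column maps are bounded below. So the extraction of joint approximate eigenvectors is not ``bookkeeping'': it is exactly the point where an idea is missing, and your sketch supplies no mechanism for it.

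The claim you need is nevertheless true at the special point $(1,\ldots,1)$ for tuples of contractions, but it requires an argument such as the following. By Gelfand theory (the paper cites \cite[Proposition 1.2]{Cu} for this) there is a character $\chi$ of $\mathfrak B$ with $\chi(B_j) = 1$ for all $j$. Put $C = \frac{1}{d}(B_1 + \cdots + B_d) \in \mathfrak B$, so that $1 \in \sigma_{\mathfrak B}(C) \subseteq \overline{\mathbb D}$; since $1$ is then a boundary point of $\sigma_{\mathfrak B}(C)$, spectral permanence ($\partial \sigma_{\mathfrak B}(C) \subseteq \sigma_{\mathcal B(\mathcal K)}(C)$) gives $1 \in \sigma(C)$, and as a boundary point of the spectrum of the single contraction $C$ it is an approximate eigenvalue: there are unit vectors $k_n$ with $Ck_n \to k_n$. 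Now the averaging trick splits this into the individual operators: $\frac{1}{d}\sum_j \mathrm{Re}\langle B_j k_n, k_n\rangle \to 1$ with each term at most $1$ forces $\mathrm{Re}\langle B_j k_n, k_n \rangle \to 1$ for every $j$, whence
\begin{equation*}
\|(B_j - 1)k_n\|^2 \Le 2\bigl(1 - \mathrm{Re}\langle B_j k_n, k_n\rangle\bigr) \longrightarrow 0 .
\end{equation*}
Once this is in place, your telescoping estimate and the vectors $h \otimes k_n$ finish the argument correctly (and the left/right cases coalesce, since for a contraction $\|(B_j^* - 1)k_n\|^2 \Le 2\,\mathrm{Re}\langle k_n, (1-B_j)k_n\rangle \to 0$ as well). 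For comparison, the paper avoids approximate eigenvectors altogether: it tensors the character $\chi$ with a Hahn--Banach functional $\phi$ satisfying $\phi(p(A)) = \|p(A)\|$ and evaluates the contractive functional $\phi \otimes \chi$ at $p(A \otimes B)$, which yields $\|p(A\otimes B)\| \Ge \|p(A)\|$ in one line. Your route, once repaired as above, is a legitimate and somewhat more hands-on alternative, at the cost of the spectral-permanence argument that your sketch currently omits.
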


Using this characterization, we prove that if $A = (A_1, \ldots, A_d)$ is a $d$-tuple of commuting contractions on a Hilbert space $H$ and $T = (T_1, \ldots, T_d)$ is a commuting operator-valued multishift on $\ell^2_H(\mathbb N^d)$ with operator weights given by $A^{(j)}_\alpha = A_j$ for all $\alpha \in \mathbb N^d$ and $j = 1, \ldots, d$, then $T$ satisfies the von Neumann's inequality if and only if $A$ satisfies the von Neumann's inequality. This readily yields a family of operator-valued multishifts with invertible operator weights which do not satisfy the von Neumann's inequality. This is in contrast with Theorem \ref{Hartz}. We conclude this paper with a concrete example of a commuting operator-valued multishift with invertible operator weights which does not satisfy the von Neumann's inequality. This example is motivated by the one which Kaijser and Varopoulos \cite{V1} gave to disprove the von Neumann's inequality for $3$-tuple of commuting contractions. It is worth mentioning that a commuting $d$-tuple of contractions dilates to a commuting $d$-tuple of unitaries if and only if it satisfies the matrix version of von Neumann's inequality \cite[Corollary 7.7]{P}. We refer the reader to \cite{P, AM, SF, MS, Sarkar} for recent developments related to the von Neumann's inequality.

We set below the notations used in posterior sections.
For a set $X$ and a positive integer $d$,  $X^d$ stands for the $d$-fold Cartesian product of $X$.
The symbols ${\mathbb N}, \ \mathbb Z,\ \mathbb R$ and $\mathbb C$ stand for the set of nonnegative
integers, set of integers, the field of real numbers and the field of complex numbers, respectively.
For $\alpha =
(\alpha_1, \ldots, \alpha_d) \in {\mathbb{N}}^d,$
we set $|\alpha|:=\sum_{j=1}^d
\alpha_j$.
For $w=(w_1, \ldots, w_d) \in \mathbb C^d$ and $\alpha=(\alpha_1, \ldots, \alpha_d) \in \mathbb N^d$, the complex conjugate $\overline{w} \in \mathbb C^d$ of $w$ is given by $(\overline{w}_1, \ldots, \overline{w}_d),$ while $w^\alpha$ denotes the complex number $\prod_{j=1}^d w^{\alpha_j}_j$. The symbol $\mathbb D^d$ is reserved for the open unit polydisc in $\mathbb C^d$ centered at the origin whereas $\overline{\mathbb D}^d$ stands for the closed unit polydisc in $\mathbb C^d$. 
Let $\mathcal H$ be a complex Hilbert space.
If $F$ is a subset of $\mathcal H$, the closed linear span of $F$ is denoted by $\bigvee \{x : x \in F\}$.  For a positive integer $m,$ the orthogonal direct
sum of $m$ copies of $\mathcal H$ is denoted by $\mathcal H^{(m)}.$ 
Let $\mathcal{B}({\mathcal H})$ denote the unital Banach algebra of
bounded linear operators on $\mathcal H.$ The multiplicative identity $I$ of 
$\mathcal{B}(\mathcal H)$ is sometimes denoted by $I_{\mathcal H}$.
The norm on  $\mathcal H$ is denoted by $\|\cdot\|_{\mathcal H}$ 
and whenever there is no confusion likely, 
we remove the subscript $\mathcal H$ from $\|\cdot\|_{\mathcal H}$. 
If $T \in \mathcal B(\mathcal H)$, then $T^*$ denotes the Hilbert space adjoint of $T$. An operator $T \in \mathcal B(\mathcal H)$ is said to be a {\it contraction} if $\|T\| \Le 1$. 
By a  {\it commuting $d$-tuple $T=(T_1, \ldots, T_d)$ in $\mathcal B(\mathcal H)$}, 
we mean a collection of commuting operators $T_1, \ldots, T_d$ in $\mathcal B(\mathcal H)$ and $T$ is said to be {\it contractive} if $T_j$ is a contraction for each $j=1, \ldots, d.$  
For $\alpha = (\alpha_1, \ldots, \alpha_d) \in \mathbb N^d$, we understand $T^{\alpha}$ as the operator $T^{\alpha_1}_1\cdots T^{\alpha_d}_d$, where we adhere to the convention that $A^0 = I_\mathcal H$ for $A \in  \mathcal B(\mathcal H)$. 
Let $T = (T_1, \ldots, T_d)$ be a commuting $d$-tuple in $\mathcal B(\mathcal H)$ and $\mathfrak B$ be the unital Banach algebra generated by $T_1, \ldots, T_d$. Then the algebraic spectrum $\sigma(T)$ of $T$ is given by
\beqn
\sigma(T) = \mathbb C^d \setminus \Big\{\lambda \in \mathbb C^d : \mbox{ there exist } S_1, \ldots, S_d \in \mathfrak B \mbox{ such that } \sum_{j=1}^d (T_j - \lambda_j)S_j = I_\mathcal H\Big\}.
\eeqn
The reader is referred to \cite{Cu} for a detailed account on various notions of spectra of a commuting tuple of operators.   
A $d$-tuple $T = (T_1, \ldots, T_d)$ of commuting contractions on $\mathcal H$ is said to satisfy the matrix-version of von Neumann's inequality if for every positive integer $m$,
\beqn
\|(p_{i, j}(T))_{_{1 \Le i, j \Le m}}\|_{\mathcal B(\mathcal H^{(m)})} \Le  \sup_{z \in \mathbb D^d}\|(p_{i, j}(z))_{_{1 \Le i, j \Le m}}\|_{\mathcal B(\mathbb C^m)}, \quad p_{i, j} \in \mathbb C[z_1, \ldots, z_d],
\eeqn
where $\mathbb C[z_1, \ldots, z_d]$ denotes the ring of polynomials over $\mathbb C$ in $d$ complex variables $z_1, \ldots, z_d$. 

\section{Von Neumann's inequality for tensor product of tuples}
 
In this section, we prove Theorem \ref{intro-thm}. We are grateful to the anonymous referee for his constructive comments which significantly improved the earlier version of this theorem. 
 
%
%

\begin{proof}[Proof of Theorem \ref{intro-thm}]
Since $(1, \ldots, 1) \in \sigma(B)$, it follows from \cite[Proposition 1.2]{Cu} that there exists a multiplicative linear functional $\chi$ on the unital Banach algebra $\mathfrak B$ generated by $B_1, \ldots, B_d$ such that $\chi(B_j) = 1$ for all $j = 1, \ldots, d$.
Let $k\in\mathbb N$ and 
\beqn
p(z)=\sum_{\underset{|\alpha| \Le k}{\alpha \in \mathbb N^d}}a_{\alpha}z^\alpha, \quad a_\alpha\in\mathbb C,\ z \in \mathbb C^d,
\eeqn
be a polynomial. 
By Hahn-Banach theorem there exists a contractive linear functional $\phi$ on $\mathcal B(\mathcal H)$ such that $\phi(p(A)) = \|p(A)\|$. Then $\phi \otimes \chi$ is a contractive linear functional on $\mathcal B(\mathcal H) \otimes \mathfrak B$ (see \cite{B}). Hence, we get
\beqn
\|p(A \otimes B)\| \Ge |(\phi \otimes \chi) p(A \otimes B)| = \Big|\sum_{\underset{|\alpha| \Le k}{\alpha \in \mathbb N^d}}a_{\alpha}\phi(A^\alpha) \chi(B^\alpha) \Big| = |\phi(p(A))| = \|p(A)\|.
\eeqn
This establishes that $A$ satisfies the von Neumann's inequality if $A \otimes B$ satisfies the von Neumann's inequality. 

Conversely, assume that $A$ satisfies the von Neumann's inequality. Let $k\in\mathbb N$ and 
\beqn
p(z)=\sum_{\underset{|\alpha| \Le k}{\alpha \in \mathbb N^d}}a_{\alpha}z^\alpha, \quad a_\alpha\in\mathbb C,\ z \in \mathbb C^d,
\eeqn
be a polynomial.  Consider the polynomial $p(z,w)$ given by 
\beqn
p(z,w)=\sum_{\underset{|\alpha| \Le k}{\alpha \in \mathbb N^d}} a_{\alpha} z^\alpha w^\alpha,  \quad z, w \in \mathbb C^d.
\eeqn
Since $A$ satisfies the von Neumann's inequality, for each fixed $w \in \mathbb C^d$, we get
\beq\label{6}
\|p(A,w)\|=\Big\|\sum_{\underset{|\alpha| \Le k}{\alpha \in \mathbb N^d}}a_\alpha A^\alpha w^\alpha \Big\| \Le \sup_{z\in {\mathbb D}^d}|p(z,w)|.
\eeq
Let $\{e_\lambda : \lambda \in \Lambda\}$ be an orthonormal basis of $\mathcal H$. For a finite subset $F$ of $\Lambda$, let $P_F$ denote the orthogonal projection of $\mathcal H$ onto the subspace $\bigvee\{e_\lambda : \lambda \in F\}$ of $\mathcal H$. Define the (matrix-valued) polynomial
\beqn
p_{_F}(A, w) = \sum_{\underset{|\alpha| \Le k}{\alpha \in \mathbb N^d}}a_\alpha\, P_FA^\alpha  P_F \, w^\alpha, \quad w \in \mathbb C^d.
\eeqn 
Note that for each $w \in \mathbb C^d$, the net $p_{_F}(A, w)$ converges to $p(A, w)$ in the strong operator topology and $\|p_{_F}(A, w)\| \Le \|p(A, w)\|$. Since $B = (B_1, \ldots, B_d)$ satisfies the matrix version of von Neumann's inequality, it follows that
\beq\label{star}
\|p_{_F}(A, B)\| \Le \sup_{w \in \mathbb D^d} \|p_{_F}(A, w)\| \Le \sup_{w \in \mathbb D^d} \|p(A,w)\|.
\eeq
Also observe that the net $p_{_F}(A, B)$ converges to $p(A, B)$ in strong operator topology. Since $\mathcal B(\mathcal H) \otimes \mathcal B(\mathcal K)$ is isometrically embedded in $\mathcal B(\mathcal H \otimes \mathcal K)$, we identify $p(A, B)$ with $p(A \otimes B)$. Therefore, we get 
\beqn
\|p(A\otimes B)\| \overset{\eqref{star}}\Le \sup_{w \in \mathbb D^d} \|p(A,w)\|\overset{\eqref{6}}\Le \sup_{w\in {\mathbb D}^d} \sup_{z\in\mathbb D^d}|p(z,w)|= \sup_{z\in\mathbb D^d}|p(z)|.
\eeqn
%
This completes the proof of the theorem.
\end{proof}

\section{Operator-valued multishift and the von Neumann's inequality}

This section is devoted to the study of the von Neumann's inequality for commuting operator-valued multishifts. Before exhibiting a family of commuting contractive operator-valued multishifts with invertible operator weights which do not satisfy the von Neumann's inequality, we briefly recall the notion of operator-valued multishift. The notion of {\it operator-valued unilateral weighted shift} was introduced by Lambert in \cite{L} and was studied considerably thereafter (see \cite{LT, J} for related study). We refer to its several variable generalization as the {\it operator-valued multishift}. It seems that the notion of operator-valued multishift was not formally introduced and systematically studied earlier but it appeared at several places in the literature, see for instance \cite{C-S}, \cite{Pt}, \cite{CPT}. We now proceed towards the formal definition of operator-valued multishift.

Let $d$ be a positive integer and $\{H_\alpha : \alpha \in \mathbb N^d\}$ be a multisequence of complex Hilbert spaces. Let ${\mathcal H}=\oplus_{\alpha \in \mathbb N^d} H_\alpha$ be the orthogonal direct sum of $H_\alpha$, $\alpha \in \mathbb N^d$. Then $\mathcal H$ is a Hilbert space with respect to the following inner product:
\beqn \inp{x}{y}_{_{\mathcal H}}= \sum_{\alpha \in \mathbb N^d} \inp{x_{\alpha}}{y_{\alpha}}_{_{H_\alpha}},
\quad x=\oplus_{\alpha \in \mathbb N^d}x_{\alpha},\
y=\oplus_{\alpha \in \mathbb N^d} y_{\alpha} \in {\mathcal H}.\eeqn
If $H_\alpha = H$ for all $\alpha \in \mathbb N^d$, then we denote ${\mathcal H}=\oplus_{\alpha \in \mathbb N^d} H$ by $\ell^2_{H}(\mathbb N^d)$.
Let $\{{A^{(j)}_{\alpha}} : \alpha \in  \mathbb N^d,\ j = 1, \ldots, d \}$ be a multisequence of bounded linear operators $A^{(j)}_\alpha : H_\alpha \rar H_{\alpha+\varepsilon_j},$ where $\varepsilon_j$ is the $d$-tuple in $\mathbb N^d$ with $1$ in the $j^{\mbox{\tiny{th}}}$ place and zeros
elsewhere. 
An {\it operator-valued multishift} $T$ on $\mathcal H = \oplus_{\alpha \in \mathbb N^d} H_\alpha$ with operator weights $\{{A^{(j)}_{\alpha}} : \alpha \in  \mathbb N^d,\ j = 1, \ldots, d \}$ is a $d$-tuple of operators $T_1, \ldots, T_d$ in $\mathcal H$ defined by
\beqn 
\mathcal D(T_j) &:=& \Big \{ x=\oplus_{\alpha \in \mathbb N^d }x_{\alpha} \in {\mathcal H} :
\sum_{\alpha \in \mathbb N^d} \|A^{(j)}_{\alpha}x_{\alpha} \|^2 < \infty\Big\},\\
 T_j(\oplus_{\alpha \in \mathbb N^d}x_{\alpha})&:=& \oplus_{\alpha \in \mathbb N^d}
A^{(j)}_{\alpha-\varepsilon_j}x_{\alpha-\varepsilon_j}, \quad x=\oplus_{\alpha \in \mathbb N^d}x_{\alpha}
\in \mathcal D(T_j), \ j=1, \ldots, d.\eeqn 
For each $\alpha \in \mathbb N^d$ and $j=1,\ldots, d,$ if $\alpha_j=0$, then we
interpret $A^{(j)}_{\alpha-\varepsilon_j}$ to be a zero operator and $x_{\alpha-\varepsilon_j}$ as a zero vector.
Note that each $T_j$, $j=1, \ldots, d$, is a densely defined linear operator in $\mathcal H$.

The following proposition studies the basic properties of an operator-valued multishift. Its proof is straightforward and is left for the interested readers.  

\begin{proposition}
Let $d$ be a positive integer and $T = (T_1, \ldots, T_d)$ be an operator-valued multishift on $\mathcal H = \oplus_{\alpha \in \mathbb N^d} H_\alpha$ with operator weights $\{{A^{(j)}_{\alpha}} : \alpha \in  \mathbb N^d,\ j = 1, \ldots, d \}$. Then the following statements hold:
\begin{enumerate}
\item[(i)] For $j=1,\ldots,d$, $T_j$ is bounded if and only if
\beq \label{tj-bdd}
\sup_{\alpha \in \mathbb N^d} \|A^{(j)}_\alpha\| < \infty.
\eeq
\item[(ii)] For $i,j=1, \ldots, d$, $T_i$ commutes with $T_j$ if and only if
\beq \label{commuting}
 A^{(i)}_{\alpha+\varepsilon_j} A^{(j)}_{\alpha}= A^{(j)}_{\alpha+\varepsilon_i} A^{(i)}_{\alpha} \ \mbox{for all } \alpha \in \mathbb N^d.
\eeq
\end{enumerate}
\end{proposition}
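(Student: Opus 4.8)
The plan is to prove both statements by direct computation from the explicit action of each $T_j$, reducing everything to a reindexing of the direct sum $\mathcal H = \oplus_{\alpha \in \mathbb N^d} H_\alpha$ together with a test on the elementary vectors supported at a single index. No conceptual input is needed beyond the definition of $T_j$ and the convention that a weight with a negative index is the zero operator.

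For part (i), I would first fix $x = \oplus_{\alpha} x_\alpha$ in $\mathcal D(T_j)$ and compute, using the definition of $T_j$ and the orthogonality of the summands,
\[
\|T_j x\|^2 = \sum_{\alpha \in \mathbb N^d} \|A^{(j)}_{\alpha-\varepsilon_j} x_{\alpha-\varepsilon_j}\|^2 = \sum_{\beta \in \mathbb N^d} \|A^{(j)}_{\beta} x_{\beta}\|^2,
\]
where the second equality is the substitution $\beta = \alpha - \varepsilon_j$; the terms with $\alpha_j = 0$ drop out by the zero-operator convention, so $\beta$ ranges over all of $\mathbb N^d$. If $\sup_\alpha \|A^{(j)}_\alpha\| =: M < \infty$, this gives $\|T_j x\|^2 \Le M^2 \sum_\beta \|x_\beta\|^2 = M^2 \|x\|^2$, so $\mathcal D(T_j) = \mathcal H$ and $T_j$ is bounded with $\|T_j\| \Le M$. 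Conversely, if $T_j$ is bounded, I would test on the vector supported only at a fixed $\beta$ with unit entry $u \in H_\beta$: the identity above yields $\|A^{(j)}_\beta u\| = \|T_j x\| \Le \|T_j\|$, whence $\|A^{(j)}_\beta\| \Le \|T_j\|$ for every $\beta$, and hence $\sup_\alpha \|A^{(j)}_\alpha\| \Le \|T_j\| < \infty$. Together these actually give the sharper identity $\|T_j\| = \sup_\alpha \|A^{(j)}_\alpha\|$.

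For part (ii), I would compute the $\alpha$-component of each composite. Applying the definition twice, the $\alpha$-component of $T_i T_j x$ is $A^{(i)}_{\alpha-\varepsilon_i} A^{(j)}_{\alpha-\varepsilon_i-\varepsilon_j} x_{\alpha-\varepsilon_i-\varepsilon_j}$, while that of $T_j T_i x$ is $A^{(j)}_{\alpha-\varepsilon_j} A^{(i)}_{\alpha-\varepsilon_i-\varepsilon_j} x_{\alpha-\varepsilon_i-\varepsilon_j}$. Thus $T_i T_j = T_j T_i$ holds if and only if, for every $\alpha$ with $\alpha_i, \alpha_j \Ge 1$, the two products of weight operators agree on $H_{\alpha - \varepsilon_i - \varepsilon_j}$ (the cases $\alpha_i = 0$ or $\alpha_j = 0$ are automatic, both sides vanishing by convention). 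Writing $\gamma = \alpha - \varepsilon_i - \varepsilon_j$, so that $\alpha - \varepsilon_i = \gamma + \varepsilon_j$ and $\alpha - \varepsilon_j = \gamma + \varepsilon_i$, this condition is precisely $A^{(i)}_{\gamma+\varepsilon_j} A^{(j)}_\gamma = A^{(j)}_{\gamma+\varepsilon_i} A^{(i)}_\gamma$ for all $\gamma \in \mathbb N^d$, which is \eqref{commuting}.

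There is no genuine obstacle here; the content is entirely bookkeeping, and the only points requiring care are the index shifts $\beta = \alpha - \varepsilon_j$ and $\gamma = \alpha - \varepsilon_i - \varepsilon_j$ together with the boundary convention, which ensure that the reindexed sums and the edge components are handled correctly. For part (ii), if one does not presuppose boundedness, it suffices to verify the operator identity on the dense subspace of finitely supported vectors, on which both compositions are everywhere defined; once the weights are bounded as in (i), the identity then extends to all of $\mathcal H$ by continuity.
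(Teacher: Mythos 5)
Your proof is correct, and it is precisely the routine direct verification that the paper has in mind: the paper itself omits the argument, stating that ``its proof is straightforward and is left for the interested readers.'' Your reindexing computations, the test on elementary vectors for the converse directions, and the remark about verifying commutation on finitely supported vectors are exactly the bookkeeping the authors intend.
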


Let $T = (T_1, \ldots, T_d)$ be an operator-valued multishift on $\mathcal H = \oplus_{\alpha \in \mathbb N^d} H_\alpha$ with operator weights
$\{A^{(j)}_{\alpha}: \alpha \in \mathbb N^d,\  j=1, \ldots, d\}$. We refer to $T$ as {\it commuting operator-valued multishift} if the operator weights satisfy \eqref{tj-bdd} and \eqref{commuting}. Let us see how the class of classical multishifts is contained in that of operator-valued multishifts.

Let $\{w^{(j)}_\alpha : \alpha \in \mathbb N^d,\ j=1, \ldots,d\}$ be a multisequence of non-zero complex numbers such that $\sup_{\alpha \in \mathbb N^d} |w^{(j)}_\alpha| < \infty$ and $w^{(j)}_{\alpha+\varepsilon_i} w^{(i)}_\alpha = w^{(i)}_{\alpha+\varepsilon_j} w^{(j)}_\alpha$ for all $\alpha \in \mathbb N^d$, $i,j=1,\ldots,d$. Let $\mathcal H = \ell^2_{\mathbb C}(\mathbb N^d)$. Set $A^{(j)}_\alpha := w^{(j)}_\alpha I_{\mathbb C}$ for all $\alpha \in \mathbb N^d$ and $j=1,\ldots,d$. Then the commuting operator-valued multishift $T=(T_1, \ldots, T_d)$ with operator weights $\{A^{(j)}_{\alpha}: \alpha \in \mathbb N^d,\  j=1, \ldots, d\}$ is commonly known as {\it classical multishift} which was introduced in \cite{JL}. Note that $\ell^2_{\mathbb C}(\mathbb N^d)$ is nothing but $\ell^2(\mathbb N^d)$ and hence, in future, we shall write $\ell^2(\mathbb N^d)$ in place of $\ell^2_{\mathbb C}(\mathbb N^d)$.

We produce several families of commuting operator-valued multishifts for which the von Neumann's inequality always holds. We begin with the following lemma which generalizes \cite[Corollary 3.2]{L}.

\begin{lemma} \label{unit-wt-eq}
Let $d$ be a positive integer and $H$ be a complex Hilbert space. Let $T = (T_1, \ldots, T_d) \mbox { and } \tilde{T} = (\tilde{T_1}, \ldots, \tilde{T_d})$ be two commuting operator-valued multishifts on $\ell^2_{H}(\mathbb N^d)$ with respective unitary operator weights
$\{A^{(j)}_{\alpha}: \alpha \in \mathbb N^d, \  j=1, \ldots, d\}$ and $\{\tilde{A}^{(j)}_{\alpha}: \alpha \in \mathbb N^d, \  j=1, \ldots, d\}$. Then $T$ and $\tilde{T}$ are unitarily equivalent.
\end{lemma}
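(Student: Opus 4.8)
The plan is to construct an explicit \emph{diagonal} unitary $U = \oplus_{\alpha \in \mathbb N^d} U_\alpha$ on $\ell^2_H(\mathbb N^d)$, where each $U_\alpha$ is a unitary on $H$, chosen so that $U$ intertwines the two tuples, i.e. $U T_j = \tilde T_j U$ for every $j$. Writing out this intertwining relation on the $\alpha$-component and using that each $A^{(j)}_\beta$ is unitary (so that its inverse equals its adjoint), one sees that it is equivalent to the recursion
\begin{equation*}
U_{\beta + \varepsilon_j} = \tilde{A}^{(j)}_\beta\, U_\beta\, (A^{(j)}_\beta)^{*}, \qquad \beta \in \mathbb N^d,\ j = 1, \ldots, d.
\end{equation*}
Starting from $U_0 = I_H$ at the root $\alpha = 0$ and iterating this recursion along any monotone lattice path from $0$ to $\alpha$, one produces a candidate value of $U_\alpha$ as a finite product of the unitaries $\tilde{A}^{(j)}_\bullet$ and $(A^{(j)}_\bullet)^{*}$; in particular each $U_\alpha$ is automatically unitary, being a product of unitaries.

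First I would verify that this prescription is \emph{well defined}, i.e. that the value of $U_\alpha$ does not depend on the chosen path from $0$ to $\alpha$. Since any two monotone lattice paths in $\mathbb N^d$ joining $0$ to $\alpha$ differ by a finite sequence of swaps of two consecutive steps in distinct directions, it suffices to check path-independence across a single elementary square, namely that the two ways of passing from $U_\beta$ to $U_{\beta + \varepsilon_i + \varepsilon_j}$ (via $\beta + \varepsilon_i$ or via $\beta + \varepsilon_j$) agree. Expanding both expressions via the recursion, one obtains on one side the leading factor $\tilde{A}^{(j)}_{\beta + \varepsilon_i}\tilde{A}^{(i)}_\beta$ and the trailing factor $(A^{(i)}_\beta)^{*}(A^{(j)}_{\beta + \varepsilon_i})^{*}$, and on the other side the leading factor $\tilde{A}^{(i)}_{\beta + \varepsilon_j}\tilde{A}^{(j)}_\beta$ and the trailing factor $(A^{(j)}_\beta)^{*}(A^{(i)}_{\beta + \varepsilon_j})^{*}$, both flanking the same middle factor $U_\beta$. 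The commutativity condition \eqref{commuting} for $\tilde T$ forces the two leading factors to coincide, while the adjoint of condition \eqref{commuting} for $T$ forces the two trailing factors to coincide; hence the two expressions for $U_{\beta + \varepsilon_i + \varepsilon_j}$ agree. This is the heart of the argument and the step I expect to be the main obstacle, since it is exactly where the commutativity of the weights is indispensable.

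Once well-definedness is established, I would set $U := \oplus_{\alpha \in \mathbb N^d} U_\alpha$. As a direct sum of unitaries on $H$ of uniformly bounded (indeed unit) norm, $U$ is a unitary operator on $\ell^2_H(\mathbb N^d)$. By construction the recursion holds, which unwinds to the componentwise identity $U_\alpha A^{(j)}_{\alpha - \varepsilon_j} = \tilde{A}^{(j)}_{\alpha - \varepsilon_j} U_{\alpha - \varepsilon_j}$ for all $\alpha$ with $\alpha_j \ge 1$; comparing the $\alpha$-components of $U T_j$ and $\tilde T_j U$ then yields $U T_j = \tilde T_j U$ for each $j = 1, \ldots, d$. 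Therefore $U^{*} T_j U = \tilde T_j$, establishing the unitary equivalence of $T$ and $\tilde T$. The recovery of \cite[Corollary 3.2]{L} corresponds to the case $d = 1$, where the path-independence check is vacuous.
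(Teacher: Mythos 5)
Your proposal is correct and takes essentially the same route as the paper: the identical diagonal unitary $U=\oplus_{\alpha\in\mathbb N^d}U_\alpha$ defined by $U_0=I_H$ and the recursion $U_{\beta+\varepsilon_j}=\tilde{A}^{(j)}_{\beta}U_\beta A^{(j)*}_{\beta}$, with well-definedness settled by exactly the same elementary-square computation that invokes the commutation relation \eqref{commuting} for $\tilde T$ and its adjoint for $T$. The only cosmetic difference is that you organize the well-definedness check as path-independence modulo adjacent swaps of steps, whereas the paper runs an induction on $|\alpha|$ comparing two predecessor-based definitions through their common predecessor $\delta$; the underlying algebra is identical.
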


\begin{proof}
Let $U_0 = I_H$ and $U_{\alpha + \epsilon_j} = \tilde{A}^{(j)}_{\alpha} U_\alpha A^{(j)*}_{\alpha}$ for all $\alpha \in \mathbb N^d$ and $j = 1, \ldots, d$. We show that $U_\alpha$ is well-defined for each $\alpha \in \mathbb N^d$. To this end, first note that if $\alpha \in \mathbb N^d$ is such that $|\alpha| \Le 1$, then $U_\alpha$ is well defined. Now assume that   $|\alpha| \Ge 2.$ Let $\alpha = \beta + \epsilon_j = \gamma + \epsilon_k = \delta + \epsilon_j + \epsilon_k$ for some $\beta, \gamma, \delta \in \mathbb N^d$ and $j,k \in \{1, \ldots, d\}$. Using \eqref{commuting}, we get
\beqn
U_{\beta+\epsilon_j} &=&  \tilde{A}^{(j)}_{\beta} U_\beta A^{(j)*}_{\beta} = \tilde{A}^{(j)}_{\delta+\epsilon_k} U_{\delta+\epsilon_k} A^{(j)*}_{\delta+\epsilon_k} = \tilde{A}^{(j)}_{\delta+\epsilon_k} \tilde{A}^{(k)}_{\delta} U_\delta A^{(k)*}_{\delta} A^{(j)*}_{\delta+\epsilon_k}\\
&=& \tilde{A}^{(k)}_{\delta+\epsilon_j} \tilde{A}^{(j)}_{\delta} U_\delta A^{(j)*}_{\delta} A^{(k)*}_{\delta+\epsilon_j} = \tilde{A}^{(k)}_{\delta+\epsilon_j} U_{\delta+\epsilon_j} A^{(k)*}_{\delta+\epsilon_j} = \tilde{A}^{(k)}_{\gamma} U_\gamma A^{(k)*}_{\gamma}\\
&=& U_{\gamma+\epsilon_k}.
\eeqn
Since $\tilde{A}^{(j)}_{\alpha}$ and $A^{(j)}_{\alpha}$ are unitary operators,  it is clear that each $U_\alpha$ is a unitary operator on $H$. Set $U := \oplus_{\alpha \in \mathbb N^d} U_\alpha$. It is a routine verification to show that $U$ is a unitary on $\ell^2_{H}(\mathbb N^d)$ and $UT_jU^* = \tilde{T_j}$ for all $j=1, \ldots, d.$ This completes the proof.
\end{proof}

\begin{proposition}\label{unit-wt-eq-cor}
Let $d$ be a positive integer and $H$ be a complex Hilbert space. Let $T = (T_1, \ldots, T_d)$ be a commuting operator-valued multishift on $\ell^2_H(\mathbb N^d)$ with unitary operator weights $\{A^{(j)}_{\alpha} : \alpha \in \mathbb N^d,\  j=1, \ldots, d\}$. Then $T$ satisfies the von Neumann's inequality.
\end{proposition}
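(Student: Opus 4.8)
The plan is to reduce the statement to a single universal model by means of the unitary-equivalence result just established, and then to exploit the tensor structure of that model. Concretely, let $\tilde T = (\tilde T_1, \ldots, \tilde T_d)$ be the commuting operator-valued multishift on $\ell^2_H(\mathbb N^d)$ all of whose operator weights equal $I_H$; these weights are unitary and trivially satisfy \eqref{commuting}, so Lemma \ref{unit-wt-eq} applies to the pair $T, \tilde T$. First I would invoke Lemma \ref{unit-wt-eq} to produce a unitary $U$ on $\ell^2_H(\mathbb N^d)$ with $U T_j U^* = \tilde T_j$ for every $j = 1, \ldots, d$. Since $\|p(T)\| = \|U\, p(T)\, U^*\| = \|p(\tilde T)\|$ for every polynomial $p \in \mathbb C[z_1, \ldots, z_d]$, the von Neumann's inequality for $T$ is equivalent to that for $\tilde T$, and it suffices to treat the model $\tilde T$.

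Next I would identify $\tilde T$ with a tensor product. Writing $\ell^2_H(\mathbb N^d) = \ell^2(\mathbb N^d) \otimes H$ and letting $S = (S_1, \ldots, S_d)$ denote the unweighted classical multishift on $\ell^2(\mathbb N^d)$ — that is, the multishift whose scalar weights are all equal to $1$ — one checks directly from the definition of the weights that $\tilde T_j = S_j \otimes I_H$ for each $j$. Consequently, for every polynomial $p$ we have $p(\tilde T) = p(S) \otimes I_H$, whence
\[
\|p(\tilde T)\| = \|p(S) \otimes I_H\| = \|p(S)\|.
\]
Thus the inequality for $\tilde T$ collapses to the inequality for the scalar multishift $S$.

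Finally I would verify that $S$ satisfies the von Neumann's inequality. The tuple $S$ is precisely the $d$-tuple of coordinate shifts on the Hardy space of the polydisc, which is the restriction to a co-invariant subspace of the commuting unitaries given by multiplication by the coordinate functions on $L^2(\mathbb T^d)$; hence $S$ dilates to a $d$-tuple of commuting unitaries and satisfies the von Neumann's inequality. Alternatively, $S$ is a contractive classical multishift with non-zero (indeed unit) weights, so Theorem \ref{Hartz} supplies the required unitary dilation at once. Combining either route with the previous display gives $\|p(T)\| = \|p(S)\| \Le \sup_{z \in \mathbb D^d}|p(z)|$, as desired. (One could equally package the last two paragraphs through Theorem \ref{intro-thm}, taking $A = S$ and $B = (I_H, \ldots, I_H)$, since $B$ trivially satisfies the matrix-version of von Neumann's inequality and $(1, \ldots, 1) \in \sigma(B)$.) The only non-formal ingredient here is the von Neumann's inequality for the unweighted polydisc shift $S$; the remaining steps — the unitary reduction and the tensor identification — are bookkeeping, so I expect no substantial obstacle once Lemma \ref{unit-wt-eq} is in hand.
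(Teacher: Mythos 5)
Your proposal is correct and follows essentially the same route as the paper: both use Lemma \ref{unit-wt-eq} to replace $T$ by the unweighted multishift and then dilate that model to a tuple of commuting unitaries. Your extra step of factoring $\ell^2_H(\mathbb N^d) = \ell^2(\mathbb N^d)\otimes H$ to reduce to the scalar shift $S$ is harmless bookkeeping; the paper simply dilates the vector-valued unweighted multishift directly to the bilateral multishift on $\ell^2_H(\mathbb Z^d)$, which is the same dilation up to this identification.
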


\begin{proof}
It follows from the preceding lemma that $T$ is unitarily equivalent to the operator-valued multishift on $\ell^2_H(\mathbb N^d)$ with operator weights being the identity operator on $H$. In other words, $T$ is unitarily equivalent to the unweighted multishift on $\ell^2_H(\mathbb N^d)$. Since the unweighted multishift on $\ell^2_H(\mathbb N^d)$ dilates to the (unweighted) bilateral multishift on $\ell^2_H(\mathbb Z^d)$, which is a $d$-tuple of commuting unitary operators, it follows that $T$ satisfies the von Neumann's inequality.
\end{proof}

Let us see another family of commuting operator-valued multishifts for which the von Neumann's inequality always holds.

\begin{proposition}\label{decom-classical}
Let $n,$ $d$ be positive integers and $T = (T_1, \ldots, T_d)$ be a commuting operator-valued multishift on $\ell^2_{\mathbb C^n}(\mathbb N^d)$ with operator weights given by 
\beqn
A^{(j)}_{\alpha} = \begin{pmatrix} w_{1,\alpha}^{(j)} & 0 & \ldots & 0 \\ 0 & w_{2,\alpha}^{(j)} & \ldots & 0\\
\vdots & \vdots & \ddots & \vdots\\
0 & 0& \ldots & w_{n,\alpha}^{(j)} \end{pmatrix} \mbox{ for all } \alpha \in \mathbb N^d \mbox{ and } j = 1, \ldots, d.
\eeqn
Then $T$ is unitarily equivalent to $W_1 \oplus \cdots \oplus W_n$, where $W_k$, $k = 1, \ldots, n$, is the classical multishift on $\ell^2(\mathbb N^d)$ with weights $\{w_{k,\alpha}^{(j)} : \alpha \in \mathbb N^d,\ j=1,\ldots, d\}$. 
\end{proposition}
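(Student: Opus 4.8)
The plan is to exhibit an explicit unitary $U \colon \ell^2_{\mathbb C^n}(\mathbb N^d) \to \bigoplus_{k=1}^n \ell^2(\mathbb N^d)$ that simultaneously intertwines each $T_j$ with $W_1^{(j)} \oplus \cdots \oplus W_n^{(j)}$, where $W_k^{(j)}$ denotes the $j$-th component of the classical multishift $W_k$. The key observation is that the operator weights $A^{(j)}_\alpha$ are simultaneously diagonal in the \emph{same} fixed orthonormal basis $\{u_1, \ldots, u_n\}$ of $\mathbb C^n$ (the standard basis), independently of $\alpha$ and $j$. This is precisely the structural feature that should let us decouple the $n$ coordinate directions of $\mathbb C^n$ from one another.

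First I would set up the underlying Hilbert space identification. An element of $\ell^2_{\mathbb C^n}(\mathbb N^d)$ is a family $x = \oplus_{\alpha \in \mathbb N^d} x_\alpha$ with each $x_\alpha \in \mathbb C^n$; writing $x_\alpha = \sum_{k=1}^n x_\alpha(k)\, u_k$ in the standard basis, I would define $U$ by $(Ux)$ to be the tuple whose $k$-th slot is the scalar multisequence $\big(x_\alpha(k)\big)_{\alpha \in \mathbb N^d} \in \ell^2(\mathbb N^d)$. That $U$ is a well-defined unitary is immediate from the Parseval-type identity $\|x_\alpha\|_{\mathbb C^n}^2 = \sum_{k=1}^n |x_\alpha(k)|^2$ summed over $\alpha$, so $U$ is a norm-preserving bijection with an obvious inverse. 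This is routine and I would only sketch it.

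The heart of the verification is the intertwining relation $U T_j = (W_1 \oplus \cdots \oplus W_n)_j\, U$ for each $j$, which I would check on a general $x \in \mathcal D(T_j)$ by comparing the $k$-th scalar slot at index $\alpha$. On the left, $T_j$ sends $x$ to the family with $\alpha$-entry $A^{(j)}_{\alpha - \varepsilon_j} x_{\alpha - \varepsilon_j}$; taking the $k$-th coordinate and using that $A^{(j)}_{\alpha - \varepsilon_j}$ is diagonal gives $w^{(j)}_{k, \alpha - \varepsilon_j}\, x_{\alpha - \varepsilon_j}(k)$. On the right, applying the classical multishift $W_k$ in its $j$-th direction to the scalar multisequence $\big(x_\beta(k)\big)_\beta$ produces exactly $w^{(j)}_{k,\alpha-\varepsilon_j}\, x_{\alpha - \varepsilon_j}(k)$ at index $\alpha$, by the very definition of $W_k$ with weights $\{w^{(j)}_{k,\alpha}\}$. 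The two agree, and the boundary convention (zero operator and zero vector when $\alpha_j = 0$) matches on both sides, so the identity holds slotwise and hence globally. The commutativity constraint \eqref{commuting} on the $A^{(j)}_\alpha$ restricts, on each diagonal entry, to the scalar compatibility condition $w^{(j)}_{k,\alpha+\varepsilon_i} w^{(i)}_{k,\alpha} = w^{(i)}_{k,\alpha+\varepsilon_j} w^{(j)}_{k,\alpha}$, which is exactly what guarantees each $W_k$ is itself a genuine commuting classical multishift, so no definitional gap arises in the target direct sum.

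I do not anticipate a serious obstacle here: the result is essentially a bookkeeping statement that a basis-independent (i.e.\ $\alpha$- and $j$-independent) simultaneous diagonalization of the weights lets the ambient $\mathbb C^n$ fiber split off as an external direct sum commuting with all shift operations. The only point requiring a little care is the domain and boundedness matching---ensuring $U$ carries $\mathcal D(T_j)$ onto $\mathcal D((W_1 \oplus \cdots \oplus W_n)_j)$ and that $\sup_\alpha \|A^{(j)}_\alpha\| = \max_k \sup_\alpha |w^{(j)}_{k,\alpha}|$ so boundedness of $T$ is equivalent to boundedness of each $W_k$---but this again follows directly from the diagonal structure. Once the unitary equivalence is in hand, the statement is proved; in fact, combined with Theorem \ref{Hartz} it would immediately show such $T$ satisfies von Neumann's inequality whenever all weights are non-zero.
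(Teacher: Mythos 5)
Your proof is correct and follows essentially the same route as the paper: your unitary $U$, which sorts the standard-basis coordinates of the $\mathbb C^n$ fibers into $n$ scalar multisequences, is precisely the map implementing the paper's decomposition $\ell^2_{\mathbb C^n}(\mathbb N^d) = \mathcal M_1 \oplus \cdots \oplus \mathcal M_n$ into the reducing subspaces $\mathcal M_k = \bigvee\{e_{\alpha,k} : \alpha \in \mathbb N^d\}$, each identified with the classical multishift $W_k$. The only difference is one of presentation: the paper states the reducing-subspace decomposition and calls the verification routine, while you write out the unitary and the slotwise intertwining explicitly.
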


\begin{proof}
Let $\{e_1, \ldots, e_n\}$ be the standard orthonormal set of vectors in $\mathbb C^n$. For $k=1, \ldots, n$, define 
\beqn
e_{\alpha, k} = \oplus_{\beta \in \mathbb N^d} x_\beta \mbox{ where } x_\beta = 0 \mbox{ if } \beta \neq \alpha \mbox{ and } x_\alpha = e_k.
\eeqn
For each $k = 1, \ldots n$, set 
\beqn
\mathcal M_k := \bigvee \big\{e_{\alpha, k} : \alpha \in \mathbb N^d \big\}. 
\eeqn
Then $\ell^2_{\mathbb C^n}(\mathbb N^d) = \mathcal M_1 \oplus \cdots \oplus \mathcal M_n$. Further, it is a routine verification to see that $\mathcal M_k$ is a reducing subspace of each $T_j$ and $(T_1|_{\mathcal M_k}, \ldots, T_d|_{\mathcal M_k})$ is unitarily equivalent to a classical multishift $W_k$ with weights $\{w_{k,\alpha}^{(j)} : \alpha \in \mathbb N^d,\ j=1,\ldots, d\}$ for $k=1, \ldots, n$. This completes the proof.
\end{proof}

As the von Neumann's inequality respects the direct sum, the following corollary immediately follows from Theorem \ref{Hartz} and the preceding proposition. 

\begin{corollary}
Let $n,$ $d$ be positive integers and $T = (T_1, \ldots, T_d)$ be a commuting contractive operator-valued multishift on $\ell^2_{\mathbb C^n}(\mathbb N^d)$ with operator weights being invertible $n \times n$ diagonal matrices. Then $T$ satisfies the von Neumann's inequality.
\end{corollary}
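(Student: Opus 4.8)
The plan is to deduce this corollary directly from the structural decomposition provided by Proposition \ref{decom-classical} together with Hartz's theorem (Theorem \ref{Hartz}), using the elementary fact that the von Neumann inequality is inherited by orthogonal direct sums. I will not reprove anything; I will simply assemble the three ingredients.

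First I would observe that the hypothesis is that the operator weights $A^{(j)}_\alpha$ are invertible $n \times n$ diagonal matrices. Writing $A^{(j)}_\alpha = \mathrm{diag}(w^{(j)}_{1,\alpha}, \ldots, w^{(j)}_{n,\alpha})$, invertibility of each $A^{(j)}_\alpha$ is equivalent to all the diagonal entries $w^{(j)}_{k,\alpha}$ being non-zero. This is exactly the setup of Proposition \ref{decom-classical}, so I may invoke it to conclude that $T$ is unitarily equivalent to $W_1 \oplus \cdots \oplus W_n$, where each $W_k$ is the classical multishift on $\ell^2(\mathbb N^d)$ with weights $\{w^{(j)}_{k,\alpha} : \alpha \in \mathbb N^d,\ j = 1, \ldots, d\}$.

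Next I would note that since $T$ is contractive, so is each summand $W_k$: indeed, $T_j|_{\mathcal M_k}$ is a restriction of the contraction $T_j$ to a reducing subspace, hence itself a contraction, and unitary equivalence preserves the norm. Moreover each $W_k$ has non-zero weights by the observation above. Therefore Theorem \ref{Hartz} applies to each $W_k$ individually: each $W_k$ is a contractive classical multishift with non-zero weights and hence dilates to a $d$-tuple of commuting unitaries, which in particular means each $W_k$ satisfies the von Neumann inequality.

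Finally I would use that the von Neumann inequality respects orthogonal direct sums, as remarked just before the corollary statement. Concretely, for any polynomial $p$ one has $p(W_1 \oplus \cdots \oplus W_n) = p(W_1) \oplus \cdots \oplus p(W_n)$, so that $\|p(W_1 \oplus \cdots \oplus W_n)\| = \max_{1 \Le k \Le n} \|p(W_k)\| \Le \sup_{z \in \mathbb D^d}|p(z)|$, the last inequality holding term by term by the previous step. Since $T$ is unitarily equivalent to this direct sum, the same bound holds for $T$, establishing the von Neumann inequality for $T$. I do not anticipate any genuine obstacle here: all the real work has been front-loaded into Proposition \ref{decom-classical} and Theorem \ref{Hartz}, and the only point requiring a word of care is translating the invertibility hypothesis on the diagonal matrix weights into the non-vanishing of the scalar weights that Theorem \ref{Hartz} requires.
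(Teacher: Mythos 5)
Your proposal is correct and follows exactly the paper's route: the paper deduces this corollary immediately from Proposition \ref{decom-classical}, Theorem \ref{Hartz}, and the fact that the von Neumann inequality respects direct sums. You have merely made explicit the small details the paper leaves tacit (invertibility of the diagonal weights giving non-zero scalar weights, and contractivity passing to the reducing summands), which is fine.
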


The following proposition facilitates us to produce a class of operator-valued multishifts for which the von Neumann's inequality does not hold.

\begin{proposition}\label{prop-for-c-example}
Let $H$ be a complex Hilbert space and $d$ be a positive integer. Suppose that $A = (A_1, \ldots, A_d)$ is a $d$-tuple of commuting contractions on $H$ and $T = (T_1, \ldots, T_d)$ is the commuting operator-valued multishift on $\ell^2_{H}(\mathbb N^d)$ with operator weights given by $A_\alpha^{(j)} = A_j$ for all $\alpha \in \mathbb N^d$ and $j=1, \ldots, d$. Then $T$ satisfies the von Neumann's inequality if and only if $A$ satisfies the von Neumann's inequality. 
\end{proposition}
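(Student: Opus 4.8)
The plan is to realize $T$ as a tensor product to which Theorem \ref{intro-thm} applies. Concretely, let $S = (S_1, \ldots, S_d)$ denote the unweighted multishift on $\ell^2(\mathbb N^d)$, i.e. $S_j e_\alpha = e_{\alpha + \varepsilon_j}$ for the standard orthonormal basis $\{e_\alpha : \alpha \in \mathbb N^d\}$. I would first identify $\ell^2_H(\mathbb N^d)$ with $\ell^2(\mathbb N^d) \otimes H$ through $\oplus_{\alpha} x_\alpha \mapsto \sum_\alpha e_\alpha \otimes x_\alpha$. A direct computation from the definition of the multishift then gives $T_j = S_j \otimes A_j$. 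Composing with the flip $\ell^2(\mathbb N^d) \otimes H \cong H \otimes \ell^2(\mathbb N^d)$ shows that $T$ is unitarily equivalent to $A \otimes S = (A_1 \otimes S_1, \ldots, A_d \otimes S_d)$. Since unitary equivalence preserves von Neumann's inequality, it suffices to prove the claim for $A \otimes S$. Note that the pair $(A, S)$ satisfies the standing hypotheses of Theorem \ref{intro-thm}: $A$ is a commuting $d$-tuple of contractions by assumption, and the $S_j$ are commuting isometries.

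To invoke Theorem \ref{intro-thm} with $B = S$, I must verify its two hypotheses on $S$. For the matrix-version of von Neumann's inequality, I would note that $S$ dilates to the bilateral multishift on $\ell^2(\mathbb Z^d)$, a commuting $d$-tuple of unitaries, exactly as recorded in the proof of Proposition \ref{unit-wt-eq-cor}; by \cite[Corollary 7.7]{P} this dilation is equivalent to $S$ satisfying the matrix-version of von Neumann's inequality.

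For the spectral condition $(1, \ldots, 1) \in \sigma(S)$, I would build a character directly. Since $S$ is the $d$-tuple of coordinate multiplications on the Hardy space $H^2(\mathbb D^d) = \ell^2(\mathbb N^d)$, one has $p(S) = M_p$, and for $w \in \mathbb D^d$ the Szegő kernel at $w$ is a joint eigenvector of $M_p^* = p(S)^*$ with eigenvalue $\overline{p(w)}$. Hence $\|p(S)\| \Ge \sup_{z \in \mathbb D^d} |p(z)| \Ge |p(1, \ldots, 1)|$ for every polynomial $p$, the last inequality because $(1, \ldots, 1) \in \overline{\mathbb D}^d$. Consequently $p(S) \mapsto p(1, \ldots, 1)$ is a well-defined contractive multiplicative functional on the polynomials in $S$, which extends by continuity to a character $\chi$ on the unital Banach algebra generated by $S_1, \ldots, S_d$ with $\chi(S_j) = 1$ for each $j$. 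By \cite[Proposition 1.2]{Cu}, the existence of such a character is equivalent to $(1, \ldots, 1) \in \sigma(S)$.

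With both hypotheses in hand, Theorem \ref{intro-thm} applied to the pair $(A, S)$ gives that $A \otimes S$ satisfies von Neumann's inequality if and only if $A$ does, which completes the proof after transferring back along the unitary equivalence $T \cong A \otimes S$. I expect the main obstacle to be the spectral condition $(1, \ldots, 1) \in \sigma(S)$: whereas the matrix von Neumann property of $S$ is essentially already available, here one needs the lower norm estimate $\|p(S)\| \Ge |p(1, \ldots, 1)|$ to ensure that the evaluation functional at $(1, \ldots, 1)$ is bounded, and therefore extends to a genuine character on the closed algebra generated by $S$.
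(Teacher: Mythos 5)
Your proposal is correct and follows the same overall strategy as the paper: identify $\ell^2_H(\mathbb N^d)$ with a tensor product so that $T$ is unitarily equivalent to $A \otimes S$, where $S$ is the unweighted multishift on $\ell^2(\mathbb N^d)$, and then apply Theorem \ref{intro-thm} with $B = S$. The one place where you genuinely diverge is the verification of the spectral hypothesis $(1,\ldots,1) \in \sigma(S)$. The paper gets this by citing that $S$ is the tuple of multiplications by the coordinate functions on the Hardy space of the polydisc, whose Taylor spectrum is $\overline{\mathbb D}^d$, together with the inclusion of the Taylor spectrum in the algebraic spectrum. You instead construct the evaluation character at $(1,\ldots,1)$ by hand, via the Szeg\H{o}-kernel eigenvector estimate $\|p(S)\| \Ge \sup_{z\in\mathbb D^d}|p(z)| \Ge |p(1,\ldots,1)|$, and then pass back to $\sigma(S)$ through \cite[Proposition 1.2]{Cu}. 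Your route is more elementary---it avoids multivariable spectral theory altogether---and it in fact produces exactly the object that the proof of Theorem \ref{intro-thm} consumes, namely the character $\chi$ with $\chi(S_j)=1$, so your final trip from the character to $\sigma(S)$ and back is, strictly speaking, redundant. You are also more explicit than the paper about the other hypothesis, that $S$ satisfies the matrix-version of von Neumann's inequality: you justify it, as the paper does elsewhere (proof of Proposition \ref{unit-wt-eq-cor}), by the dilation of $S$ to the bilateral multishift on $\ell^2(\mathbb Z^d)$ combined with \cite[Corollary 7.7]{P}, whereas the paper's proof of this proposition leaves that step implicit.
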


\begin{proof}
Observe that the Hilbert space $\ell^2_{H}(\mathbb N^d)$ can be realized as $H \otimes \ell^2(\mathbb N^d)$. Hence it is not difficult to see that 
\beqn
T_j = A_j \otimes S_j \mbox{ for all } j = 1, \ldots, d,
\eeqn 
where $S = (S_1, \ldots, S_d)$ is the unweighted multishift on $\ell^2(\mathbb N^d)$. Note that $S$ is unitarily equivalent to the $d$-tuple of operators of multiplication by the coordinate functions on the Hardy space of the polydisc $\mathbb D^d$. Hence it follows that the Taylor spectrum of $S$ is $\overline{\mathbb D}^d$. Since the Taylor spectrum is contained in the algebraic spectrum, it follows that $(1, \ldots, 1)$ is in the algebraic spectrum of $S$. Now the desired conclusion is immediate from Theorem \ref{intro-thm}.
\end{proof}

\begin{corollary}
Let $d$ be a positive integer and $A = (A_1, \ldots, A_d)$ be a $d$-tuple of commuting $2 \times 2$ or $3 \times 3$ contractive matrices. Let $T = (T_1, \ldots, T_d)$ be the commuting operator-valued multishift on $\ell^2_{\mathbb C^n}(\mathbb N^d)$ $(n=2 \mbox{ or } 3)$ with operator weights given by $A_\alpha^{(j)} = A_j$ for all $\alpha \in \mathbb N^d$ and $j=1, \ldots, d$. Then $T$ satisfies the von Neumann's inequality.
\end{corollary}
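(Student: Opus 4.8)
The plan is to derive this corollary directly from Proposition~\ref{prop-for-c-example} together with known low-dimensional cases of von Neumann's inequality. By Proposition~\ref{prop-for-c-example}, the multishift $T$ with constant operator weights $A^{(j)}_\alpha = A_j$ satisfies the von Neumann's inequality if and only if the $d$-tuple $A = (A_1, \ldots, A_d)$ does. So the entire burden of the proof is to verify that any commuting $d$-tuple of contractive $2 \times 2$ or $3 \times 3$ matrices satisfies the von Neumann's inequality; the multishift machinery is then a black box.

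First I would reduce to a statement about commuting contractions acting on a low-dimensional Hilbert space, namely $A$ acting on $\mathbb{C}^n$ with $n \in \{2, 3\}$. The key external fact to invoke is that commuting contractions on a Hilbert space of dimension at most $2$ (respectively $3$) admit a simultaneous unitary dilation, and hence satisfy von Neumann's inequality regardless of how many such operators are tupled together. For $n = 2$, this is classical: any commuting family of contractions on a two-dimensional space can be simultaneously unitarily dilated, so every polynomial functional-calculus estimate follows from the spectral calculus on the polydisc. For $n = 3$, the corresponding dilation result is the substantive ingredient and is precisely what makes $d \Ge 3$ tuples delicate in general; here the low dimensionality of the underlying space rescues us. I would cite the relevant dilation or inequality results for dimension $\Le 3$ from the literature referenced in the paper, e.g.\ \cite{P}, and record that this yields $\|p(A)\| \Le \sup_{z \in \mathbb{D}^d} |p(z)|$ for all $p \in \mathbb{C}[z_1, \ldots, z_d]$.

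Having established that $A$ satisfies the von Neumann's inequality, I would apply Proposition~\ref{prop-for-c-example} in the forward direction to conclude that $T$ satisfies the von Neumann's inequality. Since the hypotheses of that proposition require only that $A$ be a commuting $d$-tuple of contractions on $H = \mathbb{C}^n$, and the operator weights are exactly $A^{(j)}_\alpha = A_j$, the proposition applies verbatim with no further verification needed.

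The main obstacle is locating and correctly invoking the $3 \times 3$ dilation result. The failure of von Neumann's inequality for three or more commuting contractions in general (the Kaijser--Varopoulos and Crabb--Davie examples mentioned in the introduction) means one must be careful: the restriction here is on the \emph{dimension of the space} ($\Le 3$), not on the number of operators in the tuple, and it is the former that guarantees simultaneous dilation. The $2 \times 2$ case is routine, but I expect the $3 \times 3$ case to rely on a nontrivial theorem about commuting contractions on three-dimensional spaces admitting unitary dilations; pinning down the precise statement and reference, and confirming it holds for arbitrarily many commuting contractions rather than just pairs or triples, is where the care lies.
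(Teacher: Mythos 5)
Your overall skeleton is exactly the paper's: reduce via Proposition~\ref{prop-for-c-example} to the question of whether the $d$-tuple $A$ itself satisfies the von Neumann's inequality, then invoke known low-dimensional results. The gap lies in how you justify that low-dimensional ingredient. You propose to obtain it from a simultaneous unitary dilation theorem for commuting contractions on a space of dimension at most $3$. No such theorem is available in the literature you point to: by \cite[Corollary 7.7]{P} (quoted in the paper's introduction), a commuting $d$-tuple of contractions dilates to commuting unitaries if and only if it satisfies the \emph{matrix} version of von Neumann's inequality, and the known $2\times 2$ and $3\times 3$ results --- due to Drury \cite{D} and Knese \cite{K}, respectively --- establish only the \emph{scalar} inequality. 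Knese's proof in particular does not proceed by dilation, and neither \cite{D}, \cite{K}, nor \cite{P} asserts that commuting contractive $3\times 3$ matrices admit commuting unitary dilations; citing Paulsen's book for dimension-specific facts is off target, since it contains the general dilation/complete-boundedness theory, not these matrix theorems. So the step ``commuting contractions on $\mathbb{C}^3$ dilate, hence satisfy von Neumann's inequality'' would fail as justified, and even in the $2\times 2$ case the cited source proves the inequality rather than the dilation.

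The repair is immediate, and it is what the paper does: Proposition~\ref{prop-for-c-example} only requires the scalar von Neumann inequality for $A$, and that is precisely what \cite{D} (for $2\times 2$) and \cite{K} (for $3\times 3$) prove, for commuting tuples of arbitrary length $d$. Your closing worry --- whether the result holds for arbitrarily many commuting contractions rather than just pairs or triples --- is aimed at the right kind of issue but at the wrong statement: the scalar inequality of \cite{D} and \cite{K} does hold for all $d$, whereas the dilation statement your argument actually rests on is not established even for triples of $3\times 3$ contractions.
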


\begin{proof}
The fact that any $d$-tuple of commuting $2 \times 2$ contractive matrices satisfies the von Neumann's inequality was established in \cite{D} while that for a $d$-tuple of commuting $3\times 3$ contractive matrices was proved in \cite{K}. Now rest of the proof is immediate from the preceding proposition.
\end{proof}

We are now ready to give the example which we mentioned in the beginning of this text. Our example is motivated from the one given by Kaijser and Varopoulos \cite{V1} to disprove the von Neumann's inequality for $3$-tuple of commuting contractions. Before this, observe that if a commuting $d$-tuple $V$ of contractive matrices is a counter-example to the von Neumann's inequality, then for some $t \in (0,1)$ the commuting $d$-tuple $A = tI + (1-t)V$ of invertible contractive matrices is also a counter-example to the von Neumann's inequality. Hence, if $S$ is the unweighted multishift on $\ell^2(\mathbb N^d)$, then the operator-valued multishift $A\otimes S$ satisfies all the conditions of Theorem \ref{intro-thm} and is the desired counter-example to the von Neumann's inequality.
The following example illustrates this discussion.
\begin{example}
Let $c\in (0,1/(6+\sqrt{30}))$. Following \cite[Definition 2.5]{GR} (see also \cite{V2}), consider the Varopoulos operators  on $\mathbb C^4$ given by
	\[V_j=
	\left(
	\begin{array}{cccc}
		0 & x_j & y_j& 0\\
		0 & 0 & 0 & x_j\\
		0 & 0 & 0 & y_j\\
		0 & 0 & 0 & 0\\
	\end{array}
	\right), \quad  j=1,2,3,\]
	where $x_j,y_j\in\mathbb R$ and $x_j^2+y_j^2=(1-c)^2$ for each $j=1,2,3.$ Set $A_j:=cI+V_j$ for all $j=1,2,3.$ Since $(V_1,V_2,V_3)$ is a commuting tuple, it follows that $A=(A_1,A_2,A_3)$ is also a commuting $3$-tuple of invertible matrices. Moreover, $\|A_j\|\leq c+\|V_j\|=1$ for all $j=1,2,3.$ 
	Let $T = (T_1, T_2, T_3)$ be the commuting operator-valued multishift on $\ell^2_{\mathbb C^4}(\mathbb N^d)$ with operator weights given by $A_\alpha^{(j)} = A_j$ for all $\alpha \in \mathbb N^d$ and $j=1, 2, 3$. 
	Now, consider the Varopoulos-Kaijser polynomial \cite{V1} (see also \cite{Gu})
	$$p_{\!_V}(z_1,z_2,z_3):=z_{1}^{2}+z_{2}^{2}+z_{3}^{2}-2z_1z_2-2z_2z_3-2z_3z_1.$$
	It is shown in \cite{V1} (see also \cite{Hol}, \cite{GR}) that $\sup_{z\in\mathbb D^3}|p_{\!_V}(z)|=5.$ Further, we observe that $p_{\!_V}(A_1,A_2,A_3)$ is given by
\beqn
	\left(
	\begin{smallmatrix}
	\displaystyle\sum_{j,k=1}^{3}c^2 a_{jk} &\displaystyle \sum_{j,k=1}^{3}c\, a_{jk}(x_j+x_k) & \displaystyle\sum_{j,k=1}^{3}c\, a_{jk}(y_j+y_k) & \displaystyle\sum_{j,k=1}^{3}a_{jk}\langle X_j,X_k\rangle\\
	0 & \displaystyle\sum_{j,k=1}^{3}c^2 a_{jk}  & 0 &  \displaystyle \sum_{j,k=1}^{3}c\, a_{jk}(x_j+x_k)\\
	0 & 0 &   \displaystyle\sum_{j,k=1}^{3}c^2 a_{jk}  & \displaystyle\sum_{j,k=1}^{3}c\, a_{jk}(y_j+y_k)\\
	0 & 0 & 0 & \displaystyle\sum_{j,k=1}^{3}c^2 a_{jk}
	\end{smallmatrix}
	\right),
	\eeqn	
	where \begin{equation*}
	(a_{jk}):=
	\left(
	\begin{array}{rrr}
		1 & -1 & -1\\
		-1 & 1 & -1\\
		-1 & -1 & 1\\
	\end{array}
	\right),
\end{equation*}
and $X_j=(x_j,y_j)$ for $j=1,2,3.$
	From above it can be concluded that 
\beqn
\|p_{\!_V}(A_1,A_2,A_3)\|&\Ge& \Big|\sum_{j,k=1}^{3}a_{jk}\langle X_j,X_k\rangle\Big|\\
&=& 3(1-c)^2-2(\langle X_1,X_2\rangle + \langle X_2,X_3\rangle + \langle X_3,X_1\rangle).
\eeqn	
Following \cite[Lemma 2.18]{Thesis}, it can be shown that the right hand side of the above inequality achieves its maximum value at $X_1=(1-c)(1,0),\, X_2=(1-c)(-1/2,\sqrt{3}/2)$ and $X_3=(1-c)(-1/2,-\sqrt{3}/2)$ and therefore
\beqn
\|p_{\!_V}(A_1,A_2,A_3)\|\Ge 6(1-c)^2.
\eeqn
Thus using this and the facts that $c<1/(6+\sqrt{30})$ and $\sup_{z\in\mathbb D^3}|p_{\!_V}(z)|=5$, we conclude that $\|p_{\!_V}(A_1,A_2,A_3)\|>\sup_{z\in\mathbb D^3}|p_{\!_V}(z)|$. Now from Proposition \ref{prop-for-c-example}, we deduce that the operator-valued multishift $T=(T_1,T_2,T_3)$ does not satisfy the von Neumann's inequality.\end{example}

\medskip \textit{Acknowledgment}.
The authors are grateful to Gadadhar Misra and Sameer Chavan for several helpful suggestions and constant support. We express our gratitude to the faculty and the administration of Department of Mathematics and Statistics, IIT Kanpur and Department of Mathematics, IISc Bangalore for their warm hospitality during the preparation of this paper. We sincerely thank the referee for several constructive comments which considerably improved the presentation of this paper.

\end{document}